\theoremstyle{plain}
\newtheorem{theorem}                 {Theorem}      [section]
\newtheorem{proposition}  [theorem]  {Proposition}
\newtheorem{lemma}        [theorem]  {Lemma}
\theoremstyle{definition}
\newtheorem{example}      [theorem]  {Example}
\newtheorem{definition}   [theorem]  {Definition}
\def \H{\mbox{$\h_3(\tau)$}}
\def \r{\mbox{${\mathbb R}$}}
\def \h{\mbox{${\mathbb H}$}}
\def \E{\mbox{${\mathbb E}$}}
\def \s{\mbox{${\mathbb S}$}}
\def \n{\mbox{${{\nabla}}^\tau$}}
\def \R{\mbox{${\mathrm R^\tau}$}}
\def \g{\mbox{$g_\tau$}}
\def \L{\mbox{${\mathbb L}$}}
\begin{document}

\title{Constant angle surfaces in the Lorentzian Heisenberg group}

\author{Irene I. Onnis}
\address{Departamento de Matem\'{a}tica, C.P. 668\\ ICMC,
USP, 13560-970, S\~{a}o Carlos, SP\\ Brasil}
\email{onnis@icmc.usp.br}

\author{P. Piu}
\address{Universit\`a degli Studi di Cagliari\\
Dipartimento di Matematica e Informatica\\
Via Ospedale 72\\
09124 Cagliari}
\email{piu@unica.it}

\keywords{Lorentzian Heisenberg group, helix surfaces, constant angle surfaces.}
\thanks{The first author was supported by grant 2015/00692-5, S\~ao Paulo Research Foundation (Fapesp). The second author was supported by PRIN 2015 ``Varietà reali e complesse: geometria, topologia e analisi armonica''  Italy; and GNSAGA-INdAM, Italy.}

\begin{abstract}
In this paper, we define and, then, we characterize constant angle  spacelike and timelike  surfaces in the three-dimensional Heisenberg group, equipped with a $1$-parameter family of Lorentzian metrics. In particular, we give an explicit local parametrization of these surfaces and we produce some examples.
\end{abstract}

\maketitle

\section{Introduction}

In recent years much work has been done to understand the geometry of surfaces whose unit normal vector field forms a constant angle with a fixed field of directions of the ambient space. These surfaces are called {\em helix surfaces} or {\em constant angle surfaces} and they have been studied in all the $3$-dimensional geometries.
Several classification results were obtained so far, in different ambient spaces and, among them, we mention \cite{CDS07, DM09, DFVV07, Di,	FMV11, LM11, MO, MOP}. Moreover, helix submanifolds have been studied in higher dimensional euclidean spaces and product spaces in \cite{DSRH10,DSRH09,RH11}. 

In the case of the Riemannian Bianchi-Cartan-Vranceanu (BCV) spaces $\E^3(\kappa,\tau)$, as they admit a Riemannian submersion onto a surface of constant Gaussian curvature (called the Hopf fibration), it was considered the angle $\vartheta$ that the unit normal vector field of a surface in a BCV-space forms with the vector field tangent to the fibers of the Hopf fibration. This angle $\vartheta$ has a crucial role in the study of surfaces in BCV-spaces as shown by Daniel, in \cite{B}, where he proved that the equations of Gauss and Codazzi are given in terms of the function $\nu=\cos\vartheta$ and that this angle is one of the fundamental invariants for a surface in $\E^3(\kappa,\tau)$.

Concerning the study of helix surfaces in the Lorentzian BCV-spaces $\L^3(\kappa,\tau)$, that are described by the $2$-parameter family of Lorentzian metrics:
\begin{equation*}
g_{\kappa,\tau} =\frac{dx^{2} + dy^{2}}{F^{2}} -  \left(dz -
\tau\, \frac{y\,dx - x\,dy}{F}\right)^{2},\quad F(x,y)=1+\dfrac{\kappa}{4}(x^2+y^2),\quad \kappa, \tau\in\r,
\end{equation*}
defined on $\Omega\times\r$, with $\Omega=\{(x,y)\in\r^2\colon F(x,y)> 0\}$, we refer \cite{FN} and \cite{LM}. In \cite{LM}, the authors classified  constant angle spacelike surfaces in the Lorentz-Minkowski $3$-space, while in \cite{FN} are considered  constant angle spacelike and timelike  surfaces in the Lorentzian product spaces given by $\s^2\times \r_1$ and $\h^2 \times \r_1$. 

We observe that the projection map
$\pi: \L^3(\kappa,\tau) \to M^2(\kappa)$, given by $\pi(x, y, z) = (x, y)$, is a Riemannian submersion
from $\L^3(\kappa,\tau)$ to the surface of constant curvature $\kappa$ given by 
$$M^2(\kappa)=\Big(\Omega,\frac{dx^{2} + dy^{2}}{F^{2}}\Big)$$ and, also, its fibers are the
integral curves of the unit Killing vector field $\partial_z$, which is vertical with respect to $\pi$. The constant $\tau$ is called the {\it bundle curvature parameter} of the ambient spaces $\L^3(\kappa,\tau)$ and it satisfies the geometric identity:
\begin{equation}\label{eqprima}
{\nabla^{k,\tau}}_{X}\partial_z=\tau\, X\wedge \partial_z, \qquad X\in \mathfrak{X}(\L^3(\kappa,\tau)),
\end{equation}
 where ${\nabla^{k,\tau}}$ and $\wedge$ denote, respectively, the Levi-Civita connection and  the cross product of $\L^3(\kappa,\tau)$.
 
This paper is devoted to the study and the characterization of spacelike and timelike helix surfaces in the Lorentzian Heisenberg group given by $\L^3(0,\tau)$ (with $\tau\neq 0$), denoted by $\h_3(\tau)$, whose geometry we shall describe in Section~\ref{preli}. In Section~\ref{tre} we determine the Gauss and Codazzi equations of an oriented pseudo-Riemannian surface ${\mathcal M}$ immersed in $\h_3(\tau)$, proving that they involve the metric of ${\mathcal M}$, its
shape operator $A$, the tangential projection $T$ of the vertical vector field $\partial_z$ and the  function $\nu:=\g(N,\partial_z)\,\g(N,N)$, where $\g:=g_{0,\tau}$ and $N$ is the unit normal to ${\mathcal M}$. Moreover, from the equation~\eqref{eqprima} derive two additional equations (see \eqref{eq3} and \eqref{eq4}) that are used to determine the shape operator and the Levi-Civita connection of ${\mathcal M}$.

In Sections~\ref{spacelike} and \ref{timelike}  we define, respectively, the  constant angle spacelike and timelike surfaces in $\h_3(\tau)$ and we show that these surfaces have constant Gaussian
curvature. Finally, in the Theorems~\ref{teo-one} and \ref{teo-two} we establish the complete classification of these surfaces and, then, we construct some examples.

\section{Preliminaries}\label{preli}
Let $\h_3(\tau)$ (with $\tau\neq 0$) denote the $3$-dimensional Heisenberg group given by $\r^3$ equipped with
 the $1$-parameter family of Lorentzian metrics 
$$
g_\tau=dx^2+dy^2-(dz-\tau\,(y\,dx-x\,dy))^2,
$$
which renders the map $\pi:\h_3(\tau)\to\r^2$ a Riemannian submersion. With respect to this metric, the vector fields given by:
\begin{equation}\label{eq-basis}
\left\{\begin{aligned}
    E_1&=\frac{\partial}{\partial x}+\tau\, y\,\frac{\partial}{\partial z},\\
     E_2&=\frac{\partial}{\partial y}-\tau\, x\,\frac{\partial}{\partial z},\\
     E_3&=\frac{\partial}{\partial z},
     \end{aligned}\right.
\end{equation}
form a Lorentzian orthonormal basis on $\h_3(\tau)$ and the associated Levi-Civita connection $\n$, where $\n=\nabla^{0,\tau}$, is given by:
\begin{equation}
\begin{aligned}
&\n_{E_{1}}E_{1}= \n_{E_{2}}E_{2}= \n_{E_{3}}E_{3}=0,\\
&\n_{E_{2}}E_{1}=\tau\, E_{3}= -\n_{E_{1}}E_{2},\\
 &\n_{E_{3}}E_{1}=-\tau\, E_{2}= \n_{E_{1}}E_{3},\\
 &\n_{E_{3}}E_{2}=\tau\, E_{1}=\n_{E_{2}}E_{3}.
\end{aligned}
\label{nabla}
\end{equation}
We observe that $E_3$ is a (timelike) unit Killing vector field, that is tangent to the fibers of the submersion $\pi$ and it satisfies the following identity:
\begin{equation}\label{principal}
\n_{X}E_{3}=\tau\, X\wedge E_3, \qquad X\in \mathfrak{X}(\h_3(\tau)),
\end{equation}
where $\wedge$ is the cross product in $\h_3(\tau)$ defined by the formula
$$U\wedge V = (u_2\,v_3-u_3\,v_2)\,E_1-(u_1\,v_3-u_3\,v_1)\,E_2- (u_1\,v_2- u_2\,v_1)\,E_3.$$ Also, using the following convention 
$$\R(X,Y)Z=\n_{X}\n_{Y}Z-\n_{Y}\n_{X}Z-\n_{[X,Y]}Z,$$ the non zero components of the Riemann curvature tensor are:
\begin{equation}
\begin{aligned}
&\R(E_1,E_2)E_1=-3\tau^2\,E_2,\qquad \R(E_1,E_3)E_1=\tau^2\,E_3,\\
&\R(E_1,E_2)E_2=3\tau^2\,E_1,\,\;\quad \quad\; \R(E_2,E_3)E_2=\tau^2\,E_3,\\
&\R(E_2,E_3)E_3=\tau^2\,E_2,\quad\; \qquad \R(E_1,E_3)E_3=\tau^2\,E_1.\\
\end{aligned}
\end{equation}
Moreover, the tensor $\R$ can be described as we have done in the following result.
\begin{proposition}\label{tensor-R}
The Riemann curvature tensor $\R$ of $\H$ is determined by
\begin{equation}\label{eq-tensor-R}
\begin{aligned}
\R(X,Y)Z&=3\tau^2\,[\g(Y,Z)\,X-\g(X,Z)\,Y]\\
&+4\tau^2\,[\g(Y,E_3)\,\g(Z,E_3)\,X-\g(X,E_3)\,\g(Z,E_3)\,Y\\
&+\g(Y,Z)\,\g(X,E_3)\,E_3-\g(X,Z)\,\g(Y,E_3)\,E_3],
\end{aligned}
\end{equation}
for all vector fields $X,Y,Z$ on $\H$.
\end{proposition}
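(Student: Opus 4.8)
The plan is to verify the tensorial formula \eqref{eq-tensor-R} by evaluating both sides on all basis triples $(E_i, E_j, E_k)$ and checking that they agree; since both sides are $(1,3)$-tensors (the right-hand side is manifestly tensorial in $X,Y,Z$ because it is built from $\g$, the $E_3$-components, and the identity endomorphism), agreement on the orthonormal frame \eqref{eq-basis} suffices to establish the identity for all vector fields. First I would record the sign conventions coming from the Lorentzian metric: $\g(E_1,E_1)=\g(E_2,E_2)=1$, $\g(E_3,E_3)=-1$, and $\g(E_i,E_3)=0$ for $i=1,2$, so in particular the correction terms in \eqref{eq-tensor-R} only switch on when an $E_3$ appears among the arguments.

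Next I would organize the check into the natural cases. By the antisymmetry $\R(X,Y)Z=-\R(Y,X)Z$ of both sides, and the fact that $\R(E_i,E_i)Z=0$, only the pairs $(E_1,E_2)$, $(E_1,E_3)$, $(E_2,E_3)$ need to be considered. For the pair $(E_1,E_2)$: here $\g(E_1,E_3)=\g(E_2,E_3)=0$, so the right-hand side collapses to $3\tau^2[\g(E_2,Z)E_1-\g(E_1,Z)E_2]$, which gives $-3\tau^2 E_2$ when $Z=E_1$, $3\tau^2 E_1$ when $Z=E_2$, and $0$ when $Z=E_3$ — exactly matching the listed curvature components. For the pair $(E_1,E_3)$: now $\g(E_3,E_3)=-1$ contributes, and one computes the right-hand side as $3\tau^2[\g(E_3,Z)E_1-\g(E_1,Z)E_3] + 4\tau^2[-\g(E_3,Z)E_1 + \g(E_1,Z)E_3(-1)\cdot(\text{check signs})]$; carefully tracking the $\g(Y,E_3)=\g(E_3,E_3)=-1$ factors, the $Z=E_1$ term should yield $\tau^2 E_3$ and the $Z=E_3$ term $\tau^2 E_1$ (with the sign of $\g(E_3,E_3)=-1$ conspiring to flip $-3\tau^2+4\tau^2$ appropriately), while $Z=E_2$ gives $0$. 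The pair $(E_2,E_3)$ is identical by the symmetry between $E_1$ and $E_2$.

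The main obstacle I anticipate is purely bookkeeping: getting every minus sign right in the $E_3$-terms, since the Lorentzian signature means $\g(E_3,E_3)=-1$ enters the correction terms of \eqref{eq-tensor-R} in several places and it is easy to produce a spurious overall sign that makes, say, $\R(E_1,E_3)E_1$ come out as $-\tau^2 E_3$ instead of $+\tau^2 E_3$. The cleanest way to avoid this is to write the right-hand side of \eqref{eq-tensor-R} schematically as $3\tau^2 T_1(X,Y,Z) + 4\tau^2 T_2(X,Y,Z)$ and tabulate $T_1$ and $T_2$ separately on each of the three relevant triples before combining. Once the two $4\times 3$-type tables are assembled, the verification that $3\tau^2 T_1 + 4\tau^2 T_2$ reproduces the six nonzero components (and vanishes otherwise) is immediate, completing the proof.
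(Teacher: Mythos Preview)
Your plan is correct: both sides of \eqref{eq-tensor-R} are genuine $(1,3)$-tensors, so checking equality on the orthonormal frame $\{E_1,E_2,E_3\}$ suffices, and the six nonzero components listed in the paper together with the automatic vanishing in the remaining cases do match (your $(E_1,E_3)$ computation, once carried out, gives $-\tau^2\,\g(E_3,Z)\,E_1+\tau^2\,\g(E_1,Z)\,E_3$, which reproduces $\R(E_1,E_3)E_1=\tau^2 E_3$ and $\R(E_1,E_3)E_3=\tau^2 E_1$ as required). Your caution about the sign bookkeeping is well placed, and the suggestion to tabulate the two pieces $T_1$ and $T_2$ separately is a clean way to execute the check.

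The paper, however, argues differently. Rather than verifying on basis triples, it writes each vector as a horizontal-plus-vertical sum $X=\overline{X}+x\,E_3$ with $x=-\g(X,E_3)$, expands $\g(\R(X,Y)Z,W)$ multilinearly, and identifies two structural ingredients: the purely horizontal piece $\g(\R(\overline X,\overline Y)\overline Z,\overline W)$ has the constant-curvature form with factor $3\tau^2$, while the mixed horizontal/vertical pieces all reduce via $\R(\overline X,E_3)E_3=\tau^2\,\overline X$ to terms with factor $\tau^2$. Reassembling these and re-expressing $\overline X$ in terms of $X$ and $\g(X,E_3)$ produces exactly \eqref{eq-tensor-R}. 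This approach explains \emph{why} the formula splits into a space-form piece plus a correction quadratic in the $E_3$-components, whereas your direct verification, while entirely valid and arguably less error-prone, treats the identity as a black box to be checked case by case.
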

\begin{proof}
Putting  $\R(X\wedge Y,Z\wedge W)=\g(\R(X,Y)Z,W)$,
the matrix of $\R$ with respect to the basis $\{E_2\wedge E_3,E_3\wedge E_1,E_1\wedge E_2\}$ is given by:
$$\R=\left(\begin{array}{ccc}
-\tau^2&0&0\\
0&-\tau^2&0\\
0&0&-3\,\tau^2
\end{array}\right).$$ 
Now, we set
$X=\overline{X}+x\,\,E_3$,
where $\overline{X}$ is the  horizontal component of $X$ and $x=-\g(X,E_3)$, etc. Therefore, we obtain that
 \begin{align*}
    \g(\R(X,Y)\,Z,W)&=\g(\R(\overline{X},\overline{Y})\,\overline{Z},\overline{W})\\
    &+y\,z\,\g(\R(\overline{X},E_3)\,E_3,\overline{W})
    +x\,z\,\g(\R(E_3,\overline{Y})\,E_3,\overline{W})\\
   &+w\,x\,\g(\R(E_3,\overline{Y})\,\overline{Z},E_3)
    +y\,w\,\g(\R(\overline{X},E_3)\,\overline{Z},E_3),
    \end{align*}
    where it's easy to see that
    $$
    \g(\R(\overline{X},\overline{Y})\,\overline{Z},\overline{W})=3\tau^2\,[\g(\overline{X},\overline{W})\,\g(\overline{Y},\overline{Z})-\g(\overline{X},\overline{Z})\,\g(\overline{Y},\overline{W})].$$
    Also, as $$\R(\overline{X},E_3)\,E_3=\tau^2\,\overline{X},\qquad 
    \R(E_3,\overline{Y})\,E_3=-\tau^2\,\overline{Y},$$
    it results that 
     \begin{align*}
    \g(\R(X,Y)\,Z,W)&=3\tau^2\,[\g(\overline{X},\overline{W})\,\g(\overline{Y},\overline{Z})-\g(\overline{X},\overline{Z})\,\g(\overline{Y},\overline{W})]\\
    &+\tau^2\,[z\,y\,\g(\overline{X},\overline{W})+x\,w\,\g(\overline{Y},\overline{Z})-x\,z\,\g(\overline{Y},\overline{W})-w\,y\,\g(\overline{X},\overline{Z})]\\
    &=3\tau^2\,[\g(Y,Z)\g(X,W)-\g(X,Z)\,\g(Y,W)]\\
&+4\tau^2\,[\g(Y,E_3)\,\g(Z,E_3)\,\g(X,W)-\g(X,E_3)\,\g(Z,E_3)\,\g(Y,W)\\
&+\g(Y,Z)\,\g(X,E_3)\,\g(E_3,W)-\g(X,Z)\,\g(Y,E_3)\,\g(E_3,W)].
    \end{align*}
   Since $W$ is arbitrary, we obtain the equation~\eqref{eq-tensor-R}.
    \end{proof}
   
\section{The structure equations for surfaces in $\H$}\label{tre}
In this section, we will determine the structure equations for a surface ${\mathcal M}$ immersed into the Lorentzian Heisenberg group $\H$ that will be used, in the following sections, to study the constant angle surfaces in this ambient space. We remember that a surface ${\mathcal M}$ is called {\it spacelike} (respectively, {\it timelike}) if  the induced metric on ${\mathcal M}$ via the immersion is a Riemannian (respectively, Lorentzian) metric.

Let ${\mathcal M}$ be an oriented pseudo-Riemannian surface immersed into $\H$ and let  $N$ be a unit normal vector field, that is $g_\tau(N,N)=\varepsilon$, where $\varepsilon=-1$ (respectively,  $\varepsilon=1$) if ${\mathcal M}$ is a spacelike (respectively, a timelike) surface.

The Gauss and Weingarten formulas, for all $X,Y\in C(T{\mathcal M})$, are
\begin{equation}\label{gauss-wein}\begin{aligned}
    \n_X Y&=\nabla_X Y+\alpha(X,Y),\\
    \n_X N&=-A(X),
    \end{aligned}
\end{equation}
where with $A$ we have indicated  the shape operator of ${\mathcal M}$ in $\H$, with $\nabla$ the induced Levi-Civita connection on ${\mathcal M}$ and by $\alpha$  the second fundamental form of ${\mathcal M}$ in $\H$. Note that the second fundamental form can be written as
$$\alpha(X, Y) =\varepsilon\,\g(A(X),Y)\,N,\qquad X,Y\in C(T{\mathcal M}).$$
If we project the vector field $E_3$ onto the tangent plane to ${\mathcal M}$, we have $$E_3=T+\nu\, N,$$ for a certain smooth function $\nu$ defined on ${\mathcal M}$. Here $T$ is the tangent part of $E_3$ which satisfies 
\begin{equation}\label{t}
g_\tau(T,T)=-(1+\varepsilon\,\nu^2).
\end{equation}

We observe that, for all $X\in T{\mathcal M}$, we have that
\begin{equation}\label{eq1}\begin{aligned}
    \n_X E_3&=\n_X T+X(\nu)\,N+\nu\,\n_X N\\&=\nabla_X T+X(\nu)\,N+\varepsilon\,g_\tau(A(X),T)\,N-\nu\,A(X).\end{aligned}
\end{equation}
On the other hand, writing $X=\sum_{i=1}^3 X_i \,E_i$ and using \eqref{principal}, we get:
\begin{equation}\label{eq2}\begin{aligned}
    \n_X E_3&=
    \tau\, X\wedge E_3\\&=\varepsilon\,\tau\,g_\tau(JX,T)\,N-\tau\,\nu JX,
    \end{aligned}
\end{equation}
where $JX=N\wedge X$ denotes the rotation of angle $\pi/2$ on $T\mathcal{M}$ and it satisfies
\begin{equation}\label{j}
\g(JX,JX)=-\varepsilon\,\g(X,X),\qquad J^2X=\varepsilon\,X.
\end{equation}
Identifying the tangent and normal components of \eqref{eq1} and \eqref{eq2} respectively, we obtain
\begin{equation}\label{eq3}
    \nabla_X T= \nu\,(A(X)-\tau\,JX)
\end{equation}
and
\begin{equation}\label{eq4}
   X(\nu)=-\varepsilon\, g_\tau(A(X)-\tau\,JX,T).
\end{equation}
In the following result, we will give the expression of the Gauss and Codazzi equations for a pseudo-Riemannian surface ${\mathcal M}$ in $\H$.

 \begin{proposition}
    Under the previous conditions, the Gauss and Codazzi equations in $\H$ are given, respectively, by:
    \begin{equation}\label{eq-gauss}
K=\overline{K}+\varepsilon\,\mathrm{det} A=\varepsilon\,(\mathrm{det} A-4\,\tau^2\,\nu^2)-\tau^2
    \end{equation}
and
    \begin{equation}\label{eq-codazzi}
\nabla_X A(Y)-\nabla_Y A(X)-A[X,Y]=-4\,\varepsilon\,\nu\,\tau^2\,[g_\tau(Y,T)X-g_\tau(X,T)Y],
    \end{equation}
    where $X$ and $Y$ are tangent vector fields on ${\mathcal M}$, $K$ is the Gauss curvature of ${\mathcal M}$ and $\overline{K}$ denotes the sectional curvature in $\H$ of the plane tangent to ${\mathcal M}$. 
    \end{proposition}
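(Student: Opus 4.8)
The plan is to derive both equations from the general Gauss and Codazzi equations of a pseudo-Riemannian submanifold, feeding in the explicit curvature tensor $\R$ from Proposition~\ref{tensor-R}. First I would recall the abstract Gauss equation: for tangent vector fields $X,Y,Z,W$ on $\mathcal M$,
\[
\g(\R(X,Y)Z,W)=\g(R^{\mathcal M}(X,Y)Z,W)-\varepsilon\,[\g(\alpha(X,Z),\alpha(Y,W))-\g(\alpha(Y,Z),\alpha(X,W))],
\]
where I have used $\alpha(X,Y)=\varepsilon\,\g(A(X),Y)\,N$ and $\g(N,N)=\varepsilon$ so that the normal-bundle terms contribute with the sign $\varepsilon$. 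Taking an orthonormal (in the pseudo-Riemannian sense) tangent frame $\{e_1,e_2\}$ with $\g(e_1,e_1)=\varepsilon_1$, $\g(e_2,e_2)=\varepsilon_2$, $\varepsilon_1\varepsilon_2=-\varepsilon$ (this is forced since $N$ is normal and $\det$ of the metric is $\varepsilon$ up to sign), the intrinsic term gives $\overline K=\varepsilon_1\varepsilon_2\,\g(R^{\mathcal M}(e_1,e_2)e_2,e_1)$ and the extrinsic term collapses to $\varepsilon\,\det A$; this is the first equality in \eqref{eq-gauss}.

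For the second equality I would substitute $X=e_1$, $Y=e_2$, $Z=e_2$, $W=e_1$ into \eqref{eq-tensor-R}. Writing $E_3=T+\nu N$ and using $\g(e_i,E_3)=\g(e_i,T)$, the $3\tau^2$ block yields $3\tau^2[\g(e_2,e_2)\g(e_1,e_1)-\g(e_1,e_2)^2]=3\tau^2\varepsilon_1\varepsilon_2=-3\tau^2\varepsilon$. The $4\tau^2$ block yields $4\tau^2[\g(e_2,T)\g(e_2,T)\g(e_1,e_1)-\g(e_1,T)\g(e_2,T)\g(e_1,e_2)+\g(e_2,e_2)\g(e_1,T)\g(E_3,e_1)-\g(e_1,e_2)\g(e_2,T)\g(E_3,e_1)]$, which simplifies to $4\tau^2\varepsilon_1\varepsilon_2[\g(e_1,T)^2+\g(e_2,T)^2]$. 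Now $\g(e_1,T)^2+\g(e_2,T)^2$ is (up to the frame signs) the squared tangential norm of $T$, and by \eqref{t} one has $\g(T,T)=-(1+\varepsilon\nu^2)$; being careful with the pseudo-Riemannian signs one gets $\varepsilon_1\g(e_1,T)^2+\varepsilon_2\g(e_2,T)^2=\g(T,T)$, hence this block equals $-4\tau^2\varepsilon\,\g(T,T)=4\tau^2\varepsilon(1+\varepsilon\nu^2)=4\tau^2\varepsilon+4\tau^2\nu^2$. Altogether $\overline K=\varepsilon_1\varepsilon_2\g(\R(e_1,e_2)e_2,e_1)=-\varepsilon(-3\tau^2\varepsilon+4\tau^2\varepsilon+4\tau^2\nu^2)\cdot(\text{sign bookkeeping})$, and after collecting constants this is $-\tau^2-4\varepsilon\tau^2\nu^2$, giving $K=\varepsilon\det A-4\varepsilon\tau^2\nu^2-\tau^2=\varepsilon(\det A-4\tau^2\nu^2)-\tau^2$ as claimed.

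For \eqref{eq-codazzi} I would use the abstract Codazzi equation $(\R(X,Y)Z)^\perp=(\nabla^\perp_X\alpha)(Y,Z)-(\nabla^\perp_Y\alpha)(X,Z)$, and translate the left side via \eqref{eq-tensor-R}: only the terms of \eqref{eq-tensor-R} that have a normal component survive, i.e. those containing $E_3$ paired against $N$, since $X,Y,Z$ are tangent and $\g(X,E_3)=\g(X,T)$, $\g(E_3,N)=\nu$. Projecting \eqref{eq-tensor-R} with $Z$ tangent onto $N$: the $3\tau^2$ block contributes nothing, and the $4\tau^2$ block contributes $4\tau^2[\g(Y,T)\nu\,X-\g(X,T)\nu\,Y]^\perp$ plus the two terms with a free $E_3$, namely $4\tau^2[\g(Y,Z)\g(X,T)-\g(X,Z)\g(Y,T)]E_3$, whose normal part is $4\tau^2\nu[\g(Y,Z)\g(X,T)-\g(X,Z)\g(Y,T)]N$; but actually for Codazzi one only keeps the $N$-component and, after the standard identification of $(\nabla^\perp\alpha)$ with $\nabla A$ via $\alpha(X,Y)=\varepsilon\g(AX,Y)N$, the terms with $\g(Y,Z),\g(X,Z)$ get absorbed into the symmetric part and what remains is exactly $-4\varepsilon\nu\tau^2[\g(Y,T)X-\g(X,T)Y]$ on the tangent side. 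Converting $(\nabla^\perp_X\alpha)(Y,Z)-(\nabla^\perp_Y\alpha)(X,Z)$ into $\varepsilon\g(\nabla_XA(Y)-\nabla_YA(X)-A[X,Y],Z)\,N$ and stripping $\varepsilon N$ and the arbitrary $Z$ yields \eqref{eq-codazzi}.

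The main obstacle is the sign bookkeeping in the pseudo-Riemannian setting: one must consistently track the causal character $\varepsilon$ of $N$, the induced signs $\varepsilon_1,\varepsilon_2$ of the tangent frame, the factor $\varepsilon$ appearing in $\alpha(X,Y)=\varepsilon\g(AX,Y)N$, and the identity \eqref{t} for $\g(T,T)$; a single misplaced sign collapses the $-\tau^2$ versus $+\tau^2$ and the $\pm4\varepsilon\tau^2\nu^2$ terms. I would therefore do the Gauss computation twice — once abstractly with the orthonormal frame and once by contracting \eqref{eq-tensor-R} directly — and cross-check against the known Riemannian limit and against the flat case $\tau=0$.
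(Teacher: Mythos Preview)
Your overall strategy is the same as the paper's: feed the explicit curvature formula \eqref{eq-tensor-R} into the abstract Gauss and Codazzi equations for a hypersurface and simplify using $E_3=T+\nu N$ and \eqref{t}. The Gauss part, while hedged (``up to the frame signs'', ``$\cdot(\text{sign bookkeeping})$''), is sound; the paper does exactly this with the specific frame choice $\g(X,X)=1$, $\g(Y,Y)=-\varepsilon$, and one clean line:
\[
-\varepsilon\,\overline K=\g(\R(X,Y)Y,X)=-3\varepsilon\tau^2+4\tau^2\bigl[\g(Y,T)^2-\varepsilon\,\g(X,T)^2\bigr]=-\varepsilon\,\tau^2\bigl[3+4\,\g(T,T)\bigr],
\]
which avoids juggling generic $\varepsilon_1,\varepsilon_2$.

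Your Codazzi computation, however, is genuinely muddled. When you project $\R(X,Y)Z$ onto $N$ with $X,Y,Z$ tangent, the first two terms of the $4\tau^2$ block are $\g(Y,E_3)\g(Z,E_3)X-\g(X,E_3)\g(Z,E_3)Y$; here $\g(Z,E_3)=\g(Z,T)$, not $\nu$, and these vectors are tangent, so their normal projection is zero --- your expression ``$4\tau^2[\g(Y,T)\nu X-\g(X,T)\nu Y]^\perp$'' is wrong on both counts. The \emph{only} normal contribution comes from the two free-$E_3$ terms, giving $(\R(X,Y)Z)^\perp=4\tau^2\nu\bigl[\g(Y,Z)\g(X,T)-\g(X,Z)\g(Y,T)\bigr]N$. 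These terms do not ``get absorbed into the symmetric part''; on the contrary, after writing the right-hand side of Codazzi as $\varepsilon\,\g\bigl(\nabla_XA(Y)-\nabla_YA(X)-A[X,Y],Z\bigr)N$ and stripping $Z$, the combination $\g(Y,Z)\g(X,T)-\g(X,Z)\g(Y,T)$ is precisely what produces $\g(X,T)Y-\g(Y,T)X$. So your conclusion is right but the reasoning that gets you there is not.

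The paper avoids this bookkeeping entirely by using the skew-symmetry $\g(\R(X,Y)Z,N)=-\g(\R(X,Y)N,Z)$ and computing $\R(X,Y)N$ directly from \eqref{eq-tensor-R}: with $\g(X,N)=\g(Y,N)=0$ and $\g(N,E_3)=\varepsilon\nu$, only one block survives and one gets $\R(X,Y)N=4\varepsilon\nu\tau^2\bigl[\g(Y,T)X-\g(X,T)Y\bigr]$ in one line. I recommend you rewrite the Codazzi half this way.
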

    \begin{proof}
    We start proving the equation~\eqref{eq-gauss}. Using that 
    $$ \begin{aligned}&\g(\alpha(X,X),\alpha(Y,Y))-\g(\alpha(X,Y),\alpha(X,Y))^2\\&=\varepsilon\,[\g(A(X),X)\,\g(A(Y),Y)-\g(A(X),Y)^2],
    \end{aligned}$$ the Gauss equation can be written as
     \begin{equation}\label{gauss1}
     K=\overline{K}+\varepsilon\,\frac{\g(A(X),X)\g(A(Y),Y)-\g(A(X),Y)^2}{\g(X,X)\g(Y,Y)-\g(X,Y)^2}.
    \end{equation}
    If we suppose that $\{X,Y\}$ is a local orthonormal frame on ${\mathcal M}$, i.e. $\g(X,X)=1$, $\g(X,Y)=~0$, $\g(Y,Y)=-\varepsilon$, we get
     $$\g(A(X),X)\,\g(A(Y),Y)-\g(A(X),Y)^2=-\varepsilon\,\mathrm{det} A.$$
     Also, from \eqref{eq-tensor-R}, we obtain that 
    \begin{equation}\label{gauss2}
\begin{aligned}-\varepsilon\,\overline{K}=\g(\R(X,Y)Y,X)&=-3\varepsilon\,\tau^2+4\tau^2\,[\g(Y,T)^2-\varepsilon\,\g(X,T)^2]\\
    &=-\varepsilon\,\tau^2\,[3+4\,\g(T,T)].
    \end{aligned}
    \end{equation}
    Now, substituting \eqref{gauss2} in \eqref{gauss1}, and using \eqref{t}, we have the equation~\eqref{eq-gauss}.
    
    To obtain \eqref{eq-codazzi}, we start from the Codazzi equation for hypersurfaces that is given by:
    $$\g(\R(X,Y)Z,N)=\g(\nabla_X A(Y)-\nabla_Y A(X)-A[X,Y],Z).$$
    Also, from Proposition~\ref{tensor-R} we get
    $$\begin{aligned}\R(X,Y)N&=4\tau^2\,\g(N,E_3)\,[g_\tau(Y,E_3)X-g_\tau(X,E_3)Y]\\
    &=4\,\varepsilon\,\nu\,\tau^2\,[g_\tau(Y,T)X-g_\tau(X,T)Y].
    \end{aligned}$$ Therefore, we obtain \eqref{eq-codazzi}.
    \end{proof}
  
Now, we are ready to begin the study of the constant angle surfaces in $\h_3(\tau)$. Firstly, we give the following:
\begin{definition}
Let ${\mathcal M}$ be an oriented pseudo-Riemannian surface in the Lorentzian Heisenberg group $\H$ and let $N$ be a unit normal vector field, with $\g(N,N)=\varepsilon$. We say that ${\mathcal M}$  is a {\it helix surface} or a {\it constant angle surface} if the function $\nu:=\varepsilon\,\g(N,E_3)$ is constant at every point of the surface.
\end{definition}

\section{Constant angle spacelike surfaces}\label{spacelike}
Let ${\mathcal M}$ be a spacelike surface  in $\h_3(\tau)$.  As $\varepsilon=-1$, from the equation~\eqref{t} it follows that (up to the orientation of $N$) we can write $\nu=\cosh\vartheta$, where $\vartheta\geq 0$ is called {\it the hyperbolic angle function} between $N$ and $E_3$. 

From now on, we assume that the 
function $\vartheta$ is constant.
Note that $\vartheta\neq 0$. In fact, if it were then the vector fields $E_1$ and $E_2$   would be tangent to the surface ${\mathcal M}$, which is absurd since  as $\tau\neq 0$ the horizontal distribution of the submersion $\pi$ is not integrable. 
\begin{lemma}\label{princ}
Let ${\mathcal M}$ be a  helix spacelike surface in $\H$ with constant angle $\vartheta>0$. Then, we have the following properties.
\begin{itemize}
  \item[(i)] With respect to the basis $\{T,JT\}$, the matrix associated to the shape operator $A$ takes the form
$$
A=\left(
\begin{array}{cc}
0 & -\tau \\
-\tau & \lambda\\
 \end{array}
 \right),
  $$
  for some smooth function $\lambda$ on ${\mathcal M}$.
  \item[(ii)] The Levi-Civita connection $\nabla$ of ${\mathcal M}$ is given by
  $$\nabla_T T=-2\tau\cosh\vartheta\, JT,\qquad \nabla_{JT} T=\lambda\cosh\vartheta\, JT,$$
  $$\nabla_T JT=2\tau\cosh\vartheta\, T,\qquad \nabla_{JT} JT=-\lambda\cosh\vartheta\, T.$$
  \item[(iii)] The Gauss curvature of ${\mathcal M}$ is constant and satisfies $$K=4\tau^2\,\cosh^2\vartheta.$$
  \item[(iv)] The function $\lambda$ satisfies the equation
  \begin{equation}\label{lambda}
    T (\lambda)+\lambda^2\,\cosh\vartheta+4\tau^2\,\cosh^3\vartheta=0.
  \end{equation}
\end{itemize}
\end{lemma}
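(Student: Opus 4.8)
The plan is to carry out everything in the globally defined frame $\{T,JT\}$, feeding the consequences \eqref{eq3} and \eqref{eq4} of the structural identity \eqref{principal} into the Gauss and Codazzi equations. First I would record the elementary facts about this frame. Since $\varepsilon=-1$ and $\nu=\cosh\vartheta$ is a constant with $\vartheta>0$, equation \eqref{t} gives $g_\tau(T,T)=-(1-\nu^2)=\sinh^2\vartheta>0$, so $T$ is spacelike and never vanishes; by \eqref{j} one has $J^2=-\mathrm{Id}$, hence $g_\tau(JT,JT)=g_\tau(T,T)=\sinh^2\vartheta$ and $g_\tau(T,JT)=0$, so $\{T,JT\}$ is a genuine orthogonal frame. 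Constancy of $\nu$ makes $X(\nu)=0$ for every $X$, so \eqref{eq4} collapses to the single relation $g_\tau(A(X)-\tau JX,T)=0$ for all $X\in T{\mathcal M}$, which will do most of the work.

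For (i) I would evaluate this relation at $X=T$ and at $X=JT$: the first gives $g_\tau(A(T),T)=\tau g_\tau(JT,T)=0$, so $A(T)=b\,JT$; the second, using $J^2T=-T$, gives $g_\tau(A(JT),T)=-\tau g_\tau(T,T)$, so $A(JT)=-\tau T+\lambda JT$ for some smooth $\lambda$; and self-adjointness $g_\tau(A(T),JT)=g_\tau(T,A(JT))$ then forces $b=-\tau$, which is precisely the claimed matrix. For (ii) I would substitute this matrix into \eqref{eq3} with $\nu=\cosh\vartheta$, obtaining $\nabla_T T=-2\tau\cosh\vartheta\,JT$ and $\nabla_{JT}T=\lambda\cosh\vartheta\,JT$, and then get the two remaining derivatives by applying $J$ (which is parallel on the oriented Riemannian surface $({\mathcal M},g_\tau|_{\mathcal M})$; equivalently, differentiate the constant $g_\tau(JT,JT)$ and the vanishing $g_\tau(T,JT)$ and use metric compatibility): $\nabla_T JT=J\nabla_T T=2\tau\cosh\vartheta\,T$ and $\nabla_{JT}JT=J\nabla_{JT}T=-\lambda\cosh\vartheta\,T$.

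Part (iii) is then immediate: the matrix in (i) has $\det A=-\tau^2$, so the Gauss equation \eqref{eq-gauss} with $\varepsilon=-1$, $\nu=\cosh\vartheta$ gives $K=-(\det A-4\tau^2\cosh^2\vartheta)-\tau^2=4\tau^2\cosh^2\vartheta$, a constant. For (iv) I would plug $X=T$, $Y=JT$ into the Codazzi equation \eqref{eq-codazzi}. From (ii), $[T,JT]=\nabla_T JT-\nabla_{JT}T=2\tau\cosh\vartheta\,T-\lambda\cosh\vartheta\,JT$; expanding $\nabla_T(A(JT))$, $\nabla_{JT}(A(T))$ and $A[T,JT]$ with the connection formulas of (ii), the $T$-components cancel identically, the $JT$-component of the left-hand side reduces to $T(\lambda)+\lambda^2\cosh\vartheta+4\tau^2\cosh\vartheta$, and the right-hand side is $-4\tau^2\cosh\vartheta\,\sinh^2\vartheta$; equating and using $1+\sinh^2\vartheta=\cosh^2\vartheta$ yields exactly \eqref{lambda}. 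The only step requiring genuine care is this last computation — the expansion of the Codazzi identity in the non-orthonormal frame $\{T,JT\}$, where the factors of $\sinh^2\vartheta$ coming from $|T|^2$ and the sign in $J^2=-\mathrm{Id}$ must be tracked consistently — while all the earlier steps are short substitutions.
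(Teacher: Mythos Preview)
Your proposal is correct and follows essentially the same approach as the paper: (i) from \eqref{eq4}, (ii) from \eqref{eq3} together with the frame relations, (iii) from the Gauss equation \eqref{eq-gauss} with $\det A=-\tau^2$, and (iv) from the Codazzi equation \eqref{eq-codazzi} evaluated at $X=T$, $Y=JT$. The only cosmetic differences are that you spell out the self-adjointness of $A$ in (i) and invoke the parallelism of $J$ in (ii), whereas the paper leaves these implicit; your Codazzi computation in (iv) reproduces the paper's verbatim.
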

\begin{proof} Point (i) follows directly from \eqref{eq4}.
From \eqref{eq3} and using $$g_\tau(T,T)=g_\tau(JT,JT)=\sinh^2\vartheta,\qquad g_\tau(T,JT)=0,$$ we obtain (ii). From the Gauss equation~\eqref{eq-gauss} in $\H$, we have that the Gauss curvature of ${\mathcal M}$ is given by
$$K=4\tau^2\,\nu^2-(\mathrm{det} A+\tau^2)=4\tau^2\,\cosh^2\vartheta.$$
 Finally,  equation~\eqref{lambda} follows from the Codazzi equation~\eqref{eq-codazzi} putting $X=T$, $Y=JT$ and using (ii).
 In fact, it is easy to see that
 $$4\tau^2\,\cosh\vartheta\,[g_\tau(JT,T)T-g_\tau(T,T)JT]=-4\tau^2\,\cosh\vartheta\,\sinh^2\vartheta\, JT$$ and
 $$\begin{aligned}&\nabla_T A(JT)-\nabla_{JT} A(T)-A[T,JT]\\&=
 \nabla_T (-\tau\, T+\lambda\, JT)-\nabla_{JT} (-\tau\, JT)-A(2\tau \cosh\vartheta\,T-\lambda \cosh\vartheta\, JT)\\&=
 (4\tau^2\cosh\vartheta+T(\lambda)+\lambda^2\,\cosh\vartheta)\,JT.
 \end{aligned}$$
\end{proof}

From $g_\tau (E_3,N)=-\cosh\vartheta$, it follows that there exists a smooth function $\varphi$ on ${\mathcal M}$ such that
$$N=\sinh\vartheta\cos\varphi\,E_1+\sinh\vartheta\sin\varphi\,E_2+\cosh\vartheta E_3.$$
Therefore, we can write
\begin{equation}\label{eq:def-T}T=E_3-\cosh\vartheta\,N=-\sinh\vartheta\,[\cosh\vartheta\cos\varphi\,E_1+\cosh\vartheta\sin\varphi\,E_2+\sinh\vartheta\,E_3]
\end{equation}
and \begin{equation}\label{eq:def-JT}JT=\sinh\vartheta\,(\sin\varphi\,E_1-\cos\varphi\,E_2).
\end{equation}
Moreover, we have
\begin{equation}
\left\{\begin{aligned}\label{eqTJ}
    A(T)&=-\n_T N=[T(\varphi)+\tau\cosh^2\vartheta+\tau\sinh^2\vartheta]\,JT,\\
     A(JT)&=-\n_{JT} N=(JT)(\varphi)\,JT-\tau\,T.
\end{aligned}\right.
\end{equation}
Comparing \eqref{eqTJ} with (i) of Lemma~\ref{princ}, it results that
\begin{equation}\label{eqTJ1}\left\{\begin{aligned}
(JT)(\varphi)&=\lambda,\\
T(\varphi)&=-2\tau\,\cosh^2\vartheta.
\end{aligned}
\right.
\end{equation}
Also, as $$[T,JT]=\cosh\vartheta\,(2\tau\, T-\lambda\,JT),$$ the compatibility condition of system~\eqref{eqTJ1}:
$$(\nabla_T JT-\nabla_{JT} T)(\varphi)=[T,JT](\varphi)=T (JT(\varphi))-JT (T(\varphi))$$ is equivalent to
\eqref{lambda}.

We now choose local coordinates $(u,v)$ on ${\mathcal M}$ such that
\begin{equation}\label{eq:local-coordinates}
\partial_u=T.
\end{equation}
 Also, as $\partial_v$ is tangent to ${\mathcal M}$, it can be written in the form
 \begin{equation}\label{eq-definition-Fv}
 \partial_v=a\,T+b\,JT,
 \end{equation}
  for certain functions $a=a(u,v)$ and $b=b(u,v)$. As
$$0=[\partial_u,\partial_v]=(a_u+2\tau\, b\,\cosh\vartheta)\,T+(b_u-b\,\lambda\,\cosh\vartheta)\,JT,$$ then
\begin{equation}\label{eqab}\left\{\begin{aligned}
a_u&=-2\tau\, b\,\cosh\vartheta,\\
b_u&=b\,\lambda\,\cosh\vartheta.
\end{aligned}
\right.
\end{equation}
Moreover, the equation \eqref{lambda} of Lemma~\ref{princ} can be written as
\begin{equation}\label{eq-lambda-du}
\lambda_u+\cosh\vartheta\,\lambda^2+4\tau^2\,\cosh^3\vartheta=0.
\end{equation}

Integrating \eqref{eq-lambda-du}, we obtain that
\begin{equation}
    \lambda(u,v)=2\tau\,\cosh\vartheta\,\tan [\eta(v)-2\tau(\cosh\vartheta)^2\,u],
\end{equation}
for some smooth function $\eta$ depending on $v$ and  we can solve  system~\eqref{eqab}. Remark that we are interested in only one coordinate system on the surface ${\mathcal M}$ and, hence, we only need one solution for $a$ and $b$, for example:
$$\left\{\begin{aligned}
a(u,v)&=\frac{\sin (\eta(v)-2\tau(\cosh\vartheta)^2\,u)}{\cosh\vartheta},\\
b(u,v)&=\cos(\eta(v)-2\tau(\cosh\vartheta)^2\,u).
\end{aligned}
\right.
$$
Moreover, using these expressions, we have that the system~\eqref{eqTJ1} becomes
\begin{equation}\label{eqTJ2}\left\{\begin{aligned}\nonumber
\varphi_u&=-2\tau \cosh^2\vartheta,\\
\varphi_v&=0,
\end{aligned}
\right.
\end{equation}
of which the general solution is given by
\begin{equation}\label{eqphi2}
    \varphi(u,v)=-2\tau \cosh^2\vartheta\,u+c,
\end{equation}
where $c$ is a real constant.

With respect to the local coordinates $(u,v)$ chosen above, we have the following characterization of the position vector of a  helix spacelike surface.

\begin{theorem}\label{teo-one}
Let ${\mathcal M}$ be a  helix spacelike surface in $\H$ with constant angle $\vartheta>0$.  Then, with respect to the local coordinates $(u,v)$ on ${\mathcal M}$ defined in \eqref{eq:local-coordinates} and \eqref{eq-definition-Fv},  the position vector $F$ of ${\mathcal M}$ in $\r^3$ is given by
\begin{equation}\label{para}
\begin{aligned}F(u,v)=\Big(&\frac{\tanh\vartheta}{2\tau}\sin u+f_1(v),-\frac{\tanh\vartheta}{2\tau}\cos u+f_2(v),\\&-\frac{ (\sinh \vartheta)^2 }{2}u+\frac{\tanh \vartheta}{2}\,[f_1(v)\cos u+f_2(v)\sin u]+f_3(v)\Big),
\end{aligned}
\end{equation}
with $$f_1'(v)^2+f_2'(v)^2=(\sinh\vartheta)^2,\qquad  f_3'(v)=\tau\,(f_2(v)\,f_1'(v)-f_1(v)\,f_2'(v)).$$
\end{theorem}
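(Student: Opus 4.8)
The plan is to recover the component functions of the position vector $F=(x,y,z)$ by integrating the first order system coming from $\partial_u=T$ and $\partial_v=a\,T+b\,JT$, after rewriting $T$ and $JT$ in the Cartesian coordinate basis. Starting from \eqref{eq:def-T} and \eqref{eq:def-JT}, which express $T$ and $JT$ in the Lorentzian orthonormal frame $\{E_1,E_2,E_3\}$, and inserting $E_1=\partial_x+\tau y\,\partial_z$, $E_2=\partial_y-\tau x\,\partial_z$, $E_3=\partial_z$ from \eqref{eq-basis}, I would obtain explicit expressions for $x_u,y_u,z_u$ and, via \eqref{eq-definition-Fv}, for $x_v,y_v,z_v$, in which the only data entering are $\vartheta$, $\tau$, the function $\varphi(u,v)=-2\tau\cosh^2\vartheta\,u+c$ fixed in \eqref{eqphi2}, and the argument $\psi:=\eta(v)-2\tau\cosh^2\vartheta\,u$ common to the chosen $a$ and $b$.

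First I would integrate the $u$--equations for $x$ and $y$. Their right hand sides equal $-\sinh\vartheta\cosh\vartheta\cos\varphi$ and $-\sinh\vartheta\cosh\vartheta\sin\varphi$, and since $\varphi$ is affine in $u$, an elementary integration yields $x$ and $y$ as $\pm\frac{\tanh\vartheta}{2\tau}$ times a trigonometric function plus two arbitrary functions of $v$, which I name $f_1(v)$ and $f_2(v)$, reproducing the first two components of \eqref{para}. Next I would integrate the $u$--equation for $z$. Its right hand side involves $x$ and $y$ through the $\tau y\,\partial_z$ and $-\tau x\,\partial_z$ terms carried by $E_1$ and $E_2$; substituting the $x,y$ just found, the purely $\varphi$--dependent contributions collapse via $\sin^2\varphi+\cos^2\varphi=1$, leaving a constant (which integrates to the linear term $-\frac{1}{2}(\sinh\vartheta)^2\,u$) plus trigonometric terms multiplied by $f_1(v)$ and $f_2(v)$; integrating in $u$ produces the third component of \eqref{para} together with a further arbitrary function $f_3(v)$.

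It then remains to impose $\partial_v=a\,T+b\,JT$, which fixes the $f_i$. Differentiating the expressions already obtained for $x$ and $y$ with respect to $v$ and matching them with the Cartesian components of $a\,T+b\,JT$, and using that $\psi-\varphi=\eta(v)-c$ depends on $v$ only (so that the angle--addition formulas leave pure functions of $v$), one gets $f_1'(v)=x_v=\sinh\vartheta\sin(\eta(v)-c)$ and $f_2'(v)=y_v=-\sinh\vartheta\cos(\eta(v)-c)$, hence $f_1'^2+f_2'^2=(\sinh\vartheta)^2$. Finally, comparing the $z$--components of $\partial_v$ with the $v$--derivative of the already integrated $z$, all explicit dependence on $u$ cancels and one is left precisely with $f_3'(v)=\tau\,(f_2 f_1'-f_1 f_2')$. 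Conversely, a direct computation of the induced metric and of a unit normal shows that every $F$ of the form \eqref{para} with $f_i$ satisfying these two relations is a spacelike immersion into $\H$ making the constant hyperbolic angle $\vartheta$ with $E_3$, so the characterization is complete.

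The step I expect to be the main obstacle is the $z$--component. Because $\tau\neq 0$ the horizontal distribution of $\pi$ is non--integrable, so the equation for $z_u$ genuinely couples to $x$ and $y$: one must integrate in the order $x,y$ and then $z$, carrying along the undetermined functions of $v$, and the computation only simplifies thanks to the Pythagorean collapse of the $\varphi$--terms and to the fact that $\psi-\varphi$ is a function of $v$ alone — the latter being exactly what makes the $v$--system explicitly solvable and yields the clean constraint on $f_3'$. A minor point to record along the way is that the freedom in the particular solution $(a,b)$ of \eqref{eqab} and in the constant $c$ only reparametrizes $(u,v)$ and therefore does not affect the statement.
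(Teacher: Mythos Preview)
Your plan is correct and follows essentially the same route as the paper: write $T$ and $JT$ in the coordinate frame via \eqref{eq-basis}, integrate the resulting $u$--equations for $F_1,F_2$ and then $F_3$, and finally impose the $v$--equations to obtain the constraints on the $f_i$, with a change of variable $\varphi(u)\mapsto u$ at the end. The only addition you make is the explicit converse check, which the paper omits; note also that the precise signs you predict for $f_1',f_2'$ differ from the paper's intermediate formula \eqref{fi-spazio}, but this is immaterial since only $f_1'^2+f_2'^2=(\sinh\vartheta)^2$ enters the statement.
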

\begin{proof}
Let ${\mathcal M}$ be a  helix spacelike surface in $\H$ with constant angle $\vartheta\in (0,+\infty)$ and let $F$ be the position vector of ${\mathcal M}$ in $\r^3$. Then, with respect to the local coordinates $(u,v)$ on ${\mathcal M}$ defined in \eqref{eq:local-coordinates} and \eqref{eq-definition-Fv}, we can write $F(u,v)=(F_1(u,v),F_2(u,v),F_3(u,v))$, with $(u,v)\in\Omega\subset\r^2$. By definition, taking into account \eqref{eq:def-T} and \eqref{eq:def-JT}, we have that
$$
\begin{aligned}\partial_u F&=(\partial_uF_1,\partial_uF_2,\partial_uF_3)=T\\
&=-\sinh\vartheta\,[\cosh\vartheta\cos\varphi\,{E_1}_{|F(u,v)}+\cosh\vartheta\sin\varphi\,{E_2}_{|F(u,v)}+\sinh\vartheta\,{E_3}_{|F(u,v)}]\,
\end{aligned}
$$
and
$$
\begin{aligned}\partial_v F&=(\partial_vF_1,\partial_vF_2,\partial_vF_3)=a\,T+b\,JT\\
&=\sinh\vartheta\,[(-a\cosh\vartheta\cos\varphi+b\sin\varphi)\,{E_1}_{|F(u,v)}\\&-(a\cosh\vartheta\sin\varphi+b\cos\varphi)\,{E_2}_{|F(u,v)}-a\sinh\vartheta\,{E_3}_{|F(u,v)}]\,.
\end{aligned}
$$
Therefore, using the expression of $E_1$, $E_2$ and $E_3$ with respect to the coordinates vector fields of $\r^3$, it results that
\begin{equation}\label{eqprime}\left\{\begin{aligned}
\partial_uF_1&=-\sinh\vartheta\cosh\vartheta\cos\varphi,\\
\partial_uF_2&=-\sinh\vartheta\cosh\vartheta\sin\varphi,\\
\partial_uF_3&=-\sinh\vartheta\,(\tau\cosh\vartheta\cos\varphi\,F_2-\tau\cosh\vartheta\sin\varphi\,F_1+ \sinh\vartheta)
\end{aligned}
\right.
\end{equation}
and
\begin{equation}\label{eqsecond}\left\{\begin{aligned}
\partial_vF_1&=\sinh\vartheta\,(-a\cosh\vartheta\cos\varphi+b\sin\varphi),\\
\partial_vF_2&=-\sinh\vartheta\,(a\cosh\vartheta\sin\varphi+b\cos\varphi),\\
\partial_vF_3&=\sinh\vartheta\,[\tau(-a\cosh\vartheta\cos\varphi+b\sin\varphi)\,F_2\\&+
\tau\,(a\cosh\vartheta\sin\varphi+b\cos\varphi)\,F_1-a \sinh\vartheta]\,.
\end{aligned}
\right.
\end{equation}
From the first two equations of \eqref{eqprime}, we obtain that
$$\left\{\begin{aligned}
F_1(u,v)&=\frac{\tanh\vartheta}{2\tau}\sin\varphi(u)+f_1(v),\\
F_2(u,v)&=-\frac{\tanh\vartheta}{2\tau}\cos\varphi(u)+f_2(v).
\end{aligned}
\right.$$
Then, using these expressions in the third equation of \eqref{eqprime} and integrating, we get
$$F_3(u,v)=-\frac{ (\sinh\vartheta)^2\,u }{2}+\frac{\tanh\vartheta}{2}\,(f_1(v)\cos\varphi (u)+f_2(v)\sin\varphi (u))+f_3(v).$$
Consequently, from \eqref{eqsecond}, we have that
\begin{equation}\label{fi-spazio}
\left\{\begin{aligned}
f_1'(v)&=-\sinh\vartheta\,\sin(\eta(v)-c),\\
f_2'(v)&=-\sinh\vartheta\,\cos(\eta(v)-c),\\
f_3'(v)&=\tau\,(f_2(v)\,f_1'(v)-f_1(v)\,f_2'(v)).
\end{aligned}\right.
\end{equation}
Using the change of variable $\varphi(u)\mapsto u$, we obtain the parametrization given in \eqref{para}.
\end{proof}

Now, we present some examples of constant angle spacelike surfaces in $\h_3(\tau)$ obtained using the parametrization given in the Theorem~\ref{teo-one}.

\begin{example}
Choosing $\eta(v)=v+c$ in \eqref{fi-spazio}, we get
$$
\left\{\begin{aligned}
f_1(v)&=\sinh\vartheta\,\cos v,\\
f_2(v)&=-\sinh\vartheta\,\sin v,\\
f_3(v)&=\tau\,v\,\sinh^2\vartheta.
\end{aligned}\right.
$$
Substituting these expressions in \eqref{para} we have explicit  parametrizations of helix spacelike surfaces that depend only of the hyperbolic angle $\vartheta$.
\begin{figure}[!h]
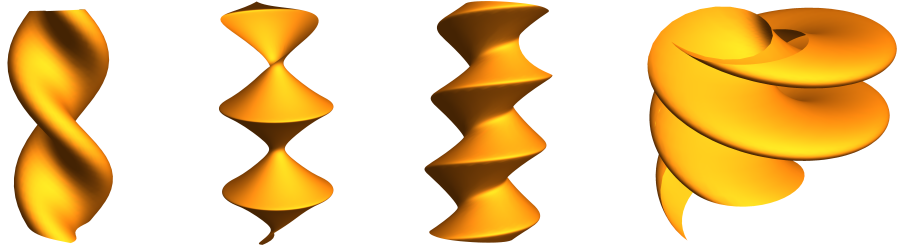

\begin{center}
\vspace{-.3 cm}
\includegraphics[width=0.12\linewidth]{Pi-3.pdf} \qquad 
\includegraphics[width=0.11\linewidth]{Pi-4.pdf}\qquad 
\includegraphics[width=0.13\linewidth]{Pi-6.pdf}\qquad 
\includegraphics[width=0.22\linewidth]{Pi-8.pdf}
\end{center}
\vspace{-.2 cm}
\caption{\it  Constant angle spacelike surfaces for $\vartheta =\pi/3$, $\vartheta =\pi/4$, $\vartheta =\pi/6$  and $\vartheta =\pi/8$.}	
\label{Figure label}
\end{figure}
\end{example}

\section{Constant angle timelike surfaces}\label{timelike}
Now we are going to study, following the same procedure as in  Section~\ref{spacelike}, the constant angle timelike surfaces in $\H$.

Let ${\mathcal M}$ be a timelike surface in the Lorentzian Heisenberg group $\H$.  As $\varepsilon=1$, from the equation~\eqref{t} it follows that (up to the orientation of $N$) we can write $\nu=\sinh\vartheta$, where $\vartheta$ is the {\it hyperbolic angle function} between $N$ and $E_3$. We observe that, from \eqref{j}, it results that
$$\g(JT,JT)=-\g(T,T)=\cosh^2\vartheta.$$

We suppose that the function $\vartheta$ is constant and we observe that if $\vartheta=0$, we have that $E_3$ is always tangent to
${\mathcal M}$ and, therefore, ${\mathcal M}$ is a Hopf cylinder. Therefore, from now on we assume that the constant angle $\vartheta\neq 0$.

 \begin{lemma}\label{princ1}
Let ${\mathcal M}$ be a  helix timelike surface in $\H$ with constant angle $\vartheta\neq 0$. Then, we have the following properties.
\begin{itemize}
  \item[(i)] With respect to the basis $\{T,JT\}$, the matrix associated to the shape operator $A$ takes the form
$$
A=\left(
\begin{array}{cc}
0 & \tau \\
-\tau & \lambda\\
 \end{array}
 \right),
  $$
  for some smooth function $\lambda$ on ${\mathcal M}$.
  \item[(ii)] The Levi-Civita connection $\nabla$ of ${\mathcal M}$ is given by
  $$\nabla_T T=-2\tau\,\sinh\vartheta\,JT,\qquad \nabla_{JT} T=\lambda\sinh\vartheta\, JT,$$
  $$\nabla_T JT=-2\tau\,\sinh\vartheta\,T,\qquad \nabla_{JT} JT=\lambda\sinh\vartheta\, T.$$
  \item[(iii)] The Gauss curvature of ${\mathcal M}$ is the constant given by: $$K=-4\tau^2\,\sinh^2\vartheta.$$
  \item[(iv)] The function $\lambda$ satisfies the equation
   \begin{equation}\label{lambda1}
    T (\lambda)+\lambda^2\,\sinh\vartheta+4\tau^2\,\sinh^3\vartheta=0.
  \end{equation}
\end{itemize}
\end{lemma}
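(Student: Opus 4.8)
The plan is to mirror the proof of Lemma~\ref{princ} verbatim, making the sign changes forced by $\varepsilon=1$ instead of $\varepsilon=-1$. First I would establish (i): from \eqref{eq4}, setting $X=T$ gives $T(\nu)=-\varepsilon\,g_\tau(A(T)-\tau JT,T)$; since $\nu$ is constant the left side vanishes, and since $g_\tau(JT,T)=0$ this yields $g_\tau(A(T),T)=0$. Setting $X=JT$ in \eqref{eq4} gives $0=-\varepsilon\,g_\tau(A(JT)-\tau JT,T)=-\varepsilon\,g_\tau(A(JT),T)+\varepsilon\tau\,g_\tau(JT,T)$. Here one must be careful: in the timelike case $\g(JT,JT)=\cosh^2\vartheta=-\g(T,T)$, while $J^2X=\varepsilon X=X$, so $g_\tau(A(JT),T)=g_\tau(A(T),JT)$ by symmetry of $A$, and writing $A(T)=\mu\,T+\rho\,JT$ one reads off from $g_\tau(A(T),T)=0$ that $\mu\,g_\tau(T,T)=0$, hence $\mu=0$, and from the other relation that the off-diagonal entry is $\tau$ (not $-\tau$), with a free function $\lambda$ as the $JT,JT$ entry. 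Since the basis $\{T,JT\}$ is orthogonal but not orthonormal, I should be explicit about whether ``the matrix associated to $A$'' means the matrix of the endomorphism in the basis $\{T,JT\}$ (which is what makes the $(1,2)$ and $(2,1)$ entries differ in sign from the bilinear form); tracing through $g_\tau(T,T)=-\cosh^2\vartheta$, $g_\tau(JT,JT)=\cosh^2\vartheta$ gives exactly the stated matrix $\begin{pmatrix}0&\tau\\-\tau&\lambda\end{pmatrix}$.

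For (ii), I would substitute the matrix from (i) into \eqref{eq3}, $\nabla_X T=\nu(A(X)-\tau JX)$ with $\nu=\sinh\vartheta$. Taking $X=T$: $A(T)=-\tau\,JT$ (from (i), since the first column is $(0,-\tau)^{\!\top}$ meaning $A(T)=-\tau\,JT$), and $J(T)=JT$, so $\nabla_T T=\sinh\vartheta(-\tau\,JT-\tau\,JT)=-2\tau\sinh\vartheta\,JT$. Taking $X=JT$: $A(JT)=\tau\,T+\lambda\,JT$ and $J(JT)=J^2T=\varepsilon T=T$, so $\nabla_{JT}T=\sinh\vartheta(\tau\,T+\lambda\,JT-\tau\,T)=\lambda\sinh\vartheta\,JT$. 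The expressions for $\nabla_T JT$ and $\nabla_{JT}JT$ then follow either by the same formula applied to $JT$ (using $\nabla_X E_3$ analysis, or by differentiating $g_\tau(T,JT)=0$ and $g_\tau(JT,JT)=\cosh^2\vartheta$ to fix them via metric compatibility) — note the sign pattern differs from the spacelike case precisely because $g_\tau(JT,JT)$ now has the opposite sign. I would double-check these against compatibility: $T\,g_\tau(T,JT)=0$ forces $g_\tau(\nabla_T T,JT)+g_\tau(T,\nabla_T JT)=0$, and indeed $g_\tau(-2\tau\sinh\vartheta\,JT,JT)+g_\tau(T,-2\tau\sinh\vartheta\,T)=-2\tau\sinh\vartheta\cosh^2\vartheta+2\tau\sinh\vartheta\cosh^2\vartheta=0$.

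For (iii), apply the Gauss equation \eqref{eq-gauss} with $\varepsilon=1$ and $\nu=\sinh\vartheta$: $K=(\det A-4\tau^2\sinh^2\vartheta)-\tau^2$. From the matrix in (i), $\det A=0\cdot\lambda-(\tau)(-\tau)=\tau^2$, so $K=\tau^2-4\tau^2\sinh^2\vartheta-\tau^2=-4\tau^2\sinh^2\vartheta$, which is constant. For (iv), I would feed $X=T$, $Y=JT$ into the Codazzi equation \eqref{eq-codazzi}. The right-hand side is $-4\varepsilon\nu\tau^2[g_\tau(JT,T)T-g_\tau(T,T)JT]=-4\sinh\vartheta\,\tau^2[\,0-(-\cosh^2\vartheta)JT\,]=-4\tau^2\sinh\vartheta\cosh^2\vartheta\,JT$. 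For the left-hand side I need $[T,JT]=\nabla_T JT-\nabla_{JT}T=-2\tau\sinh\vartheta\,T-\lambda\sinh\vartheta\,JT$, then compute $\nabla_T(A(JT))-\nabla_{JT}(A(T))-A([T,JT])$ using (i) and (ii): $A(JT)=\tau T+\lambda JT$, $A(T)=-\tau JT$, so $\nabla_T(\tau T+\lambda JT)=\tau\nabla_T T+T(\lambda)JT+\lambda\nabla_T JT$, etc. Collecting the $JT$-coefficient (the $T$-coefficients should cancel) should give $T(\lambda)+\lambda^2\sinh\vartheta+4\tau^2\sinh^3\vartheta=0$ after equating with the Codazzi right-hand side; the $\cosh^2\vartheta$ versus $\sinh^2\vartheta$ bookkeeping is where I'd be most careful, but since $\cosh^2\vartheta$ already appears on the right and the $\nabla$-terms produce another, they must combine to leave $\sinh^3\vartheta$. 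The main obstacle, as in the spacelike case, is purely sign-and-bookkeeping discipline: the non-orthonormality of $\{T,JT\}$ together with the switch $\varepsilon:-1\mapsto 1$ flips several signs, and the only real work is making sure the matrix convention for $A$, the curvature-tensor contraction, and the Codazzi contraction are all applied consistently; there is no conceptual difficulty beyond what was already handled in Lemma~\ref{princ}.
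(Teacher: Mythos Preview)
Your proposal is correct and follows exactly the approach the paper takes: the paper's own proof of Lemma~\ref{princ1} consists of a single sentence stating that the argument is analogous to that of Lemma~\ref{princ}, with the modifications that $T$ is timelike, $JT$ is spacelike, and $J^2=\mathrm{I}$---precisely the sign-and-bookkeeping changes you outline. Your write-up is in fact more explicit than the paper's, and your computations (the derivation of the matrix of $A$ from \eqref{eq4} and symmetry, the connection coefficients from \eqref{eq3} and metric compatibility, $\det A=\tau^2$ in \eqref{eq-gauss}, and the Codazzi calculation producing $T(\lambda)+\lambda^2\sinh\vartheta+4\tau^2\sinh^3\vartheta=0$) all check out.
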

\begin{proof}
The proof is analogous to that of Lemma~\ref{princ}, taking into account that $T$ is timelike, $JT$ is spacelike and that the operator of rotation $J$ satisfies $J^2=\textrm{I}$ (see \eqref{j}).
\end{proof}
From $g_\tau (E_3,N)=\sinh\vartheta$, it follows that there exists a smooth function $\varphi$ on ${\mathcal M}$ such that
$$N=\cosh\vartheta\cos\varphi\,E_1+\cosh\vartheta\sin\varphi\,E_2-\sinh\vartheta\, E_3.$$
Consequently, we obtain that
\begin{equation}\label{eq:def-T1}
\left\{\begin{aligned}
T&=E_3-\sinh\vartheta\,N=\cosh\vartheta\,[-\sinh\vartheta\cos\varphi\,E_1-\sinh\vartheta\sin\varphi\,E_2+
\cosh\vartheta\,E_3],\\
JT&=\cosh\vartheta\,(\sin\varphi\,E_1-\cos\varphi\,E_2).
\end{aligned}
\right.
\end{equation}

In this case, we have that
\begin{equation}\begin{aligned}\label{eqTJb}
    A(T)&=-\n_T N=[T(\varphi)-\tau\cosh^2\vartheta-\tau\sinh^2\vartheta]\,JT,\\
     A(JT)&=-\n_{JT} N=(JT)(\varphi)\,JT+\tau\,T.
\end{aligned}\end{equation}
Comparing \eqref{eqTJb} with (i) of Lemma~\ref{princ1}, we get
\begin{equation}\label{eqTJbi}\left\{\begin{aligned}
(JT)(\varphi)&=\lambda,\\
T(\varphi)&=2\tau\,\sinh^2\vartheta.
\end{aligned}
\right.
\end{equation}
Also, as $$[T,JT]=-\sinh\vartheta\,(2\tau\, T+\lambda\,JT),$$ the compatibility condition of system~\eqref{eqTJbi} is given by:
$$(\nabla_T JT-\nabla_{JT} T)(\varphi)=[T,JT](\varphi)=T (JT(\varphi))-JT (T(\varphi))$$ and  it is equivalent to the equation~\eqref{lambda1}.

Now, we choose local coordinates $(u,v)$ on ${\mathcal M}$ such that
\begin{equation}\label{eq:local-coordinates-b}
\partial_u=T,\qquad
 \partial_v=a\,T+b\,JT\,,
 \end{equation}
  for certain functions $a=a(u,v)$ and $b=b(u,v)$. As
$$0=[\partial_u,\partial_v]=(a_u-2\tau\, b\,\sinh\vartheta)\,T+(b_u-b\,\lambda\,\sinh\vartheta)\,JT,$$ then
\begin{equation}\label{eqab-b}\left\{\begin{aligned}
a_u&=2\tau\, b\,\sinh\vartheta,\\
b_u&=b\,\lambda\,\sinh\vartheta.
\end{aligned}
\right.
\end{equation}
Moreover, the equation \eqref{lambda1} of Lemma~\ref{princ1} can be written as
$$
\lambda_u+\sinh\vartheta\,\lambda^2+4\tau^2\,\sinh^3\vartheta=0
$$
and, solving this equation, one finds
$$
    \lambda(u,v)=2\tau\,\sinh\vartheta\,\tan [\eta(v)-2\tau(\sinh\vartheta)^2\,u],
$$
for some smooth function $\eta$ depending on $v$. As we are interested in only one coordinate system on the surface ${\mathcal M}$,  we can consider the following solution of the system~\eqref{eqab-b}:
$$\left\{\begin{aligned}
a(u,v)&=-\frac{\sin (\eta(v)-2\tau(\sinh\vartheta)^2\,u)}{\sinh\vartheta},\\
b(u,v)&=\cos(\eta(v)-2\tau(\sinh\vartheta)^2\,u).
\end{aligned}
\right.
$$
Also, using these expressions, we have that the general solution of the system~\eqref{eqTJbi}  is given by:
$$
    \varphi(u,v)=2\tau (\sinh\vartheta)^2\,u+c, \qquad c\in\r.
$$

\begin{theorem}\label{teo-two}
Let ${\mathcal M}$ be a helix timelike surface in $\H$ with constant angle $\vartheta\neq 0$.  Then, with respect to the local coordinates $(u,v)$ on ${\mathcal M}$ defined in \eqref{eq:local-coordinates-b}  the position vector $F$ of ${\mathcal M}$ in $\r^3$ is given by:
\begin{equation}\label{para-b}
\begin{aligned}F(u,v)=\Big(&-\frac{\coth\vartheta}{2\tau}\sin u+f_1(v),\frac{\coth\vartheta}{2\tau}\cos u+f_2(v),\\&\frac{ (\cosh \vartheta)^2 }{2}u-\frac{\coth \vartheta}{2}\,[f_1(v)\cos u+f_2(v)\sin u]+f_3(v)\Big),
\end{aligned}
\end{equation}
with $$f_1'(v)^2+f_2'(v)^2=(\cosh\vartheta)^2,\qquad  f_3'(v)=\tau\,(f_2(v)\,f_1'(v)-f_1(v)\,f_2'(v)).$$
\end{theorem}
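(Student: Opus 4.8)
The plan is to follow the proof of Theorem~\ref{teo-one} essentially verbatim, now feeding in the timelike data already assembled above: the expressions for $T$ and $JT$ recorded in \eqref{eq:def-T1}, the coefficients $a,b$ of $\partial_v$ in the frame $\{T,JT\}$, and the angle function $\varphi(u,v)=2\tau(\sinh\vartheta)^2 u+c$. Writing the position vector as $F(u,v)=(F_1,F_2,F_3)$ and substituting $E_1=\partial_x+\tau y\,\partial_z$, $E_2=\partial_y-\tau x\,\partial_z$, $E_3=\partial_z$ into \eqref{eq:def-T1}, the two tangency conditions $\partial_uF=T$ and $\partial_vF=a\,T+b\,JT$ turn into explicit first-order systems for $F_1,F_2,F_3$ --- the exact timelike analogues of \eqref{eqprime}--\eqref{eqsecond} --- in which the equations for $\partial_uF_3$ and $\partial_vF_3$ involve $F_1,F_2$ through the terms $\tau y=\tau F_2$ and $-\tau x=-\tau F_1$ coming from $E_1,E_2$.

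Next I would integrate. Since $\varphi_u=2\tau(\sinh\vartheta)^2\neq 0$ is constant, integrating $\partial_uF_1=-\cosh\vartheta\sinh\vartheta\cos\varphi$ and $\partial_uF_2=-\cosh\vartheta\sinh\vartheta\sin\varphi$ in $u$ yields
$$F_1=-\frac{\coth\vartheta}{2\tau}\sin\varphi+f_1(v),\qquad F_2=\frac{\coth\vartheta}{2\tau}\cos\varphi+f_2(v)$$
for functions $f_1(v),f_2(v)$. Substituting these into the $\partial_uF_3$ equation, the cross terms collapse exactly as in the spacelike computation, and a further integration in $u$ gives
$$F_3=\frac{(\cosh\vartheta)^2}{2}\,u-\frac{\coth\vartheta}{2}\,[f_1(v)\cos\varphi+f_2(v)\sin\varphi]+f_3(v)$$
for one more function $f_3(v)$. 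It then remains to impose the $\partial_v$-system on these three expressions: using the explicit $a,b,\varphi$ above, the equations for $\partial_vF_1$ and $\partial_vF_2$ simplify trigonometrically to expressions whose squares add up to $(\cosh\vartheta)^2$, giving $f_1'(v)^2+f_2'(v)^2=(\cosh\vartheta)^2$, and the equation for $\partial_vF_3$ then reduces to $f_3'(v)=\tau(f_2(v)f_1'(v)-f_1(v)f_2'(v))$. Finally, the change of variable $\varphi(u)\mapsto u$, admissible since $\varphi_u\neq 0$, produces the stated parametrization \eqref{para-b}.

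I do not expect a genuine conceptual obstacle: the structure equations of Section~\ref{tre} together with Lemma~\ref{princ1} already guarantee that the whole overdetermined system is compatible, so the work is purely computational. The one point requiring care is the bookkeeping of signs, which differs from the spacelike case of Theorem~\ref{teo-one} because here $T$ is timelike, $JT$ is spacelike, $J^2=\mathrm{I}$, and the matrix of $A$ in Lemma~\ref{princ1} carries a $+\tau$ rather than a $-\tau$ in the off-diagonal entry; these sign changes propagate through $[T,JT]$, through the solution of \eqref{eqab-b}, and ultimately account for the appearance of $\coth\vartheta$ and $(\cosh\vartheta)^2$ --- in place of $\tanh\vartheta$ and $(\sinh\vartheta)^2$ --- in \eqref{para-b}.
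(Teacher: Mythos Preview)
Your proposal is correct and follows essentially the same approach as the paper: set up the first-order systems for $\partial_uF$ and $\partial_vF$ from \eqref{eq:def-T1} and \eqref{eq:local-coordinates-b}, integrate the $u$-system using $\varphi_u=2\tau(\sinh\vartheta)^2$, then impose the $v$-system to obtain the constraints on $f_1,f_2,f_3$, and finish with the change $\varphi(u)\mapsto u$. The only cosmetic difference is that the paper records the intermediate explicit form $f_1'(v)=\cosh\vartheta\,\sin(\eta(v)+c)$, $f_2'(v)=-\cosh\vartheta\,\cos(\eta(v)+c)$ before passing to $f_1'^2+f_2'^2=(\cosh\vartheta)^2$, whereas you state the squared-sum conclusion directly.
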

\begin{proof}
With respect to the local coordinates $(u,v)$ on the  helix timelike surface ${\mathcal M}$, given in \eqref{eq:local-coordinates-b}, we can parametrize the surface as $$F(u,v)=(F_1(u,v),F_2(u,v),F_3(u,v)),\quad (u,v)\in\Omega\subset\r^2.$$ From the expressions \eqref{eq:def-T1}, it results that
\begin{equation}\label{eqprime1}\left\{\begin{aligned}
\partial_uF_1&=-\sinh\vartheta\cosh\vartheta\cos\varphi,\\
\partial_uF_2&=-\sinh\vartheta\cosh\vartheta\sin\varphi,\\
\partial_uF_3&=\cosh\vartheta\,(-\tau\sinh\vartheta\cos\varphi\,F_2+\tau\sinh\vartheta\sin\varphi\,F_1+ \cosh\vartheta)
\end{aligned}
\right.
\end{equation}
and
\begin{equation}\label{eqsecond1}\left\{\begin{aligned}
\partial_vF_1&=\cosh\vartheta\,(-a\sinh\vartheta\cos\varphi+b\sin\varphi),\\
\partial_vF_2&=-\cosh\vartheta\,(a\sinh\vartheta\sin\varphi+b\cos\varphi),\\
\partial_vF_3&=\cosh\vartheta\,[\tau(-a\sinh\vartheta\cos\varphi+b\sin\varphi)\,F_2\\&+
\tau\,(a\sinh\vartheta\sin\varphi+b\cos\varphi)\,F_1+a \cosh\vartheta].
\end{aligned}
\right.
\end{equation}
Integrating \eqref{eqprime1}, we obtain that
$$\left\{\begin{aligned}
F_1(u,v)&=-\frac{\coth\vartheta}{2\tau}\sin\varphi(u)+f_1(v),\\
F_2(u,v)&=\frac{\coth\vartheta}{2\tau}\cos\varphi(u)+f_2(v),\\
F_3(u,v)&=\frac{ (\cosh\vartheta)^2\,u }{2}-\frac{\coth\vartheta}{2}\,(f_1(v)\cos\varphi (u)+f_2(v)\sin\varphi (u))+f_3(v),
\end{aligned}
\right.$$
where, from \eqref{eqsecond1}, the functions $f_i(v)$, $i=1,2,3,$ satisfy 
the following relations:
\begin{equation}\label{fi-tempo}\left\{\begin{aligned}
f_1'(v)&=\cosh\vartheta\,\sin(\eta(v)+c),\\
f_2'(v)&=-\cosh\vartheta\,\cos(\eta(v)+c),\\
f_3'(v)&=\tau\,(f_2(v)\,f_1'(v)-f_1(v)\,f_2'(v)).
\end{aligned}\right.
\end{equation}
Finally, using the change of variable $\varphi(u)\mapsto u$, we obtain the parametrization of ${\mathcal M}$ given in \eqref{para-b}.
\end{proof}

We end the section giving some examples of constant angle timelike surfaces in $\h_3(\tau)$ constructed from the parametrization  obtained in the Theorem~\ref{teo-two}.

\begin{example}
If we choose $\eta(v)=v-c$ in \eqref{fi-tempo}, we have the expressions:
$$
\left\{\begin{aligned}
f_1(v)&=-\cosh\vartheta\,\cos v,\\
f_2(v)&=-\cosh\vartheta\,\sin v,\\
f_3(v)&=-\tau\,v\,\cosh^2\vartheta
\end{aligned}\right.
$$
and, using \eqref{para-b}, we obtain explicit parametrizations of helix timelike surfaces that depend only of the hyperbolic angle $\vartheta$.
\begin{figure}[!h]
\begin{center}
\vspace{-.3 cm}
\includegraphics[width=0.1\linewidth]{Pi-3-T.pdf} \qquad 
\includegraphics[width=0.1\linewidth]{Pi-4-T.pdf}\qquad 
\includegraphics[width=0.15\linewidth]{Pi-6-T.pdf}\qquad 
\includegraphics[width=0.23\linewidth]{Pi-8-T.pdf}
\end{center}
\vspace{-.2 cm}
\caption{\it  Constant angle timelike surfaces for $\vartheta =\pi/3$, $\vartheta =\pi/4$, $\vartheta =\pi/6$  and $\vartheta =\pi/8$.}	
\label{fig-tempo}
\end{figure}\\
\end{example}
\newpage

\end{document}